\newcommand{\ord}{\operatorname{ord}}
\newtheorem{thm}{Theorem}[section]
\newtheorem{cor}[thm]{Corollary}
\theoremstyle{definition}
\theoremstyle{remark}
\theoremstyle{definition}
    \newtheorem{defn}[thm]{Definition}
\newtheorem{rem}[thm]{Remark}
\theoremstyle{THM}
\newcommand{\abs}[1]{\left|{#1}\right|}
\def\R {{\mathbb R}}
\def\Z {{\mathbb Z}}
\def\Q {{\mathbb Q}}
\def\A {{\mathcal A}}
\def\F {{\mathbb F}}
\def\R {{\mathbb R}}
\def\Z {{\mathbb Z}}
\def\Q {{\mathbb Q}}
\def\red#1 {\textcolor{red}{#1 }}
\def\blue#1 {\textcolor{blue}{#1 }}
\numberwithin{equation}{section}
\def\Z {{\mathbb Z}}
\begin{document}

\title[Characteristic Polynomials of Simple Ordinary Abelian Varieties]{Characteristic Polynomials of Simple Ordinary Abelian Varieties over Finite Fields}

\author{Lenny Jones}
\address{Professor Emeritus, Department of Mathematics, Shippensburg University, Shippensburg, Pennsylvania 17257, USA}
\email[Lenny~Jones]{lkjone@ship.edu}

\date{\today}

\begin{abstract}
We provide an easy method for the construction of characteristic polynomials of simple ordinary abelian varieties $\A$ of dimension $g$ over a finite field $\F_q$, when $q\ge 4$ and $2g=\rho^{b-1}(\rho-1)$ for some prime $\rho\ge 5$, with $b\ge 1$. Moreover, we show that $\A$ is absolutely simple if $b=1$ and $\rho=5$, but $\A$ is not absolutely simple for arbitrary $\rho$ with $b>1$.
\end{abstract}

\subjclass[2010]{Primary 11G10, 11G25; Secondary 14G15, 14K05}
\keywords{simple ordinary abelian variety, finite field, characteristic polynomial}

\maketitle
\section{Introduction}\label{Section:Intro}
For positive integers $g$ and $q$, we say $f(t)\in \Z[t]$ is a \emph{$q$-polynomial} if
\begin{align}\label{f}\begin{split}
f(t)&=t^{2g}+a_{1}t^{2g-1}+\cdots +a_gt^g+a_{g-1}qt^{g-1}+\cdots +a_1q^{g-1}t+q^g\\
&=t^{2g}+a_gt^g+q^g+\sum_{j=1}^{g-1}a_j\left(t^{2g-j}+q^{g-j}t^j\right),
\end{split}
\end{align}
and all zeros of $f(t)$ have modulus $q^{1/2}$. Not all polynomials of the form \eqref{f} are $q$-polynomials since the condition on the moduli of the zeros of $f(t)$ imposes severe restrictions on its coefficients. For example,
\[f(t)=t^6+t^5+t^4+5t^3+2t^2+4t+8\]
 has the form \eqref{f} with $g=3$ and $q=2$, and although $f(t)$ has four zeros with modulus $2^{1/2}$, $f(t)$ has two real zeros, neither of which has modulus $2^{1/2}$.

Most likely, D. H. Lehmer \cite{Lehmer} in 1932 was the first mathematician to investigate $q$-polynomials. He was mainly interested in $q$-polynomials with the property that all zeros have the form  $q^{1/2}\zeta$, for some root of unity $\zeta$. Lehmer called such polynomials \emph{quasi-cyclotomic}. Since then, certain  $q$-polynomials have become central to the study of abelian varieties over finite fields.

Throughout this article, we let $k$ denote the finite field $\F_q$, where $q=p^n$ for some prime $p$ and positive integer $n$.  It is well known from the Honda-Tate theorem \cite{Honda, Tate, Waterhouse,WM} that the isogeny class of an abelian variety $\A$ of dimension $g$ over $k$ is determined by the characteristic polynomial 
$f_{\A}(t)\in \Z[t]$ of its Frobenius endomorphism \cite{Tate,WM}. With a slight abuse of terminology, we refer here to $f_{\A}(t)$ as the \emph{characteristic polynomial of $\A$}. It follows from the Weil conjectures \cite{Weil,Hart} (conjectured in 1949 by Weil, and subsequently proven by Dwork \cite{Dwork}, Grothendieck \cite{Grothendieck}, Deligne \cite{Deligne} and others) that $f_{\A}(t)$ has the form in \eqref{f} \cite{SMZ}, and all zeros of $f_{\A}(t)$ have modulus $q^{1/2}$.
In other words, $f_{\A}(t)$ is a $q$-polynomial, and if a $q$-polynomial $f(t)$ is such that $f(t)=f_{\A}(t)$, for some abelian variety $\A$ over $k$, then $f(t)$ is called a \emph{Weil polynomial}. However, not every $q$-polynomial is a Weil polynomial, since additional restrictions on the coefficients of $f_{\A}(t)$ are imposed by the Honda-Tate theorem. For example, it is straightforward to verify that
\[f(t)=t^4+2t^3+2t^2+16t+64\] is an irreducible $q$-polynomial with $g=2$ and $q=8$, but $f(t)$ is not the characteristic polynomial of an abelian variety over $k=\F_8$ \cite{MN,Ruck}, and so $f(t)$ is not a Weil polynomial.
\begin{rem}
  We caution the reader that while we have chosen to follow \cite{HZ} in making no distinction between Weil polynomials and characteristic polynomials $f_{\A}(t)$, certain authors \cite{H,HS,MN} have given a broader definition of Weil polynomials.
\end{rem}

For small dimensions, explicit necessary and sufficient conditions on the coefficients of \eqref{f} have been given \cite{WM,Ruck,MN,H,HS,SMZ} to determine which irreducible $q$-polynomials actually arise as characteristic polynomials of abelian varieties. Typically, Newton polygons are useful in the derivation of such conditions. For larger dimensions, however, this task becomes increasingly more difficult, and a complete characterization in arbitrary dimension seems infeasible.  

An abelian variety $\A$ over $k$ of dimension $g$ is called \emph{simple} if $\A$ has no proper nontrivial subvarieties over $k$, and $\A$ is called \emph{absolutely simple} if $\A$ is simple over the algebraic closure of $k$. Additionally, $\A$ is called \emph{ordinary} if the rank of its group of $p$-torsion points over the algebraic closure of $k$ equals $g$.

 It is the purpose of this article to present an easy method for the construction of characteristic polynomials $f_{\A}(t)$, where $\A$ is a simple ordinary abelian variety of dimension $g$ over $k$, such that $q\ge 4$ and $2g=\rho^{b-1}(\rho-1)$ for some prime $\rho\ge 5$, with $b\ge 1$. More precisely, we prove
 \begin{thm}\label{Thm:Main1}
 Let $\rho\ge 5$ be a prime, let $b\ge 1$ be an integer and let $2g=\rho^{b-1}(\rho-1)$.
     Let $r$ be a prime such that $r$ is a primitive root modulo $\rho^2$. Let $p$ be a prime, and let $n$ be a positive integer such that $q:=p^n\ge 4$ and $q\equiv 1 \pmod{r}$.
    Let $m$ be an integer such that
    $m\not \equiv -1/r\pmod{p}$ and
     \[0\le m\le \dfrac{2q^{\rho^{b-1}/2}\left(q^{\rho^{b-1}/2}-1\right)-1}{r}.\]
    Define
    \begin{equation}\label{Thm:f}
    f(t):=t^{2g}+\left(mr+1\right)t^g+q^g+\sum_{j=1}^{g-1}a_j\left(t^{2g-j}+q^{g-j}t^j\right),
    \end{equation}
    where
    \begin{equation}\label{Eq:ajconditions}
    a_j=\left\{\begin{array}{cl}
      1 & \mbox{if $j\equiv 0 \pmod{\rho^{b-1}}$}\\
      0 & \mbox{otherwise}
    \end{array}\right.\quad \mbox{for $j\in \{1,2,\ldots, g-1\}$.}
    \end{equation}
    Then $f(t)$  is the characteristic polynomial $f_{\A}(t)$ of a simple ordinary abelian variety $\A$ of dimension $g$ over the field $k=\F_q$.
        Furthermore,
        \begin{enumerate}
        \item \label{IMain:1} if $b=1$ and $\rho=5$ (so that $g=2$), then $\A$ is absolutely simple,
        \item \label{IMain:2} if $b>1$ and $\rho$ is arbitrary, then $\A$ is not absolutely simple.
        \end{enumerate}
  \end{thm}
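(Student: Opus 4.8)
The plan is to pass from $f$ to a polynomial of much smaller degree. Writing $d=\rho^{b-1}$ and $Q=q^{d}$, the pattern \eqref{Eq:ajconditions} shows that every exponent occurring in $f$ is a multiple of $d$, so $f(t)=F(t^{d})$ for a monic $F$ of degree $\rho-1=2g/d$ whose coefficients are symmetric in the $Q$-Weil sense. Since $|t|=q^{1/2}$ exactly when $|t^{d}|=Q^{1/2}$, it suffices to prove that $F$ is a $Q$-Weil polynomial. To do this I would apply the substitution $S=u+Q/u$, which turns $F(u)/u^{(\rho-1)/2}$ into a monic real polynomial $\widetilde F(S)$ of degree $g'=(\rho-1)/2$, namely $\widetilde F(S)=(mr+1)+\sum_{\ell=1}^{g'}D_{\ell}(S,Q)$, where $D_{\ell}$ is the Dickson polynomial normalized by $D_{\ell}(u+Q/u,Q)=u^{\ell}+(Q/u)^{\ell}$. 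Setting $S=2Q^{1/2}\cos\theta$ gives $\widetilde F=(mr+1)+2\sum_{\ell=1}^{g'}Q^{\ell/2}\cos(\ell\theta)$, and evaluating at $\theta_{j}=j\pi/g'$ for $j=0,\dots,g'$ makes the leading term $2Q^{g'/2}(-1)^{j}$ alternate in sign. The upper bound on $m$, which is precisely $mr+1\le 2Q-2Q^{1/2}$, together with $q\ge 4$ (so $Q^{1/2}\ge 2$), is tailored to force the remaining terms to be dominated by $2Q^{g'/2}$ at each $\theta_{j}$; hence $\widetilde F$ changes sign $g'$ times on $(-2Q^{1/2},2Q^{1/2})$ and has all of its roots $\gamma_{i}$ there. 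Each factor $u^{2}-\gamma_{i}u+Q$ then contributes a conjugate pair of roots of modulus $Q^{1/2}$, so $F$, and therefore $f$, has all zeros of modulus $q^{1/2}$. I expect this estimate to be the main obstacle, since the crude termwise bound is only barely sufficient and one must exploit the cancellation among the cosines $\cos(\ell\theta_{j})$ to obtain a strict inequality.

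Next I would establish irreducibility by reducing modulo $r$. Because $q\equiv 1\pmod r$ we have $q^{g-j}\equiv 1$, and because $mr+1\equiv 1\pmod r$ every coefficient of $F$ reduces to $1$; thus $F(u)\equiv 1+u+\cdots+u^{\rho-1}=\Phi_{\rho}(u)\pmod r$ and $f(t)\equiv\Phi_{\rho}(t^{\rho^{b-1}})=\Phi_{\rho^{b}}(t)\pmod r$. Since $r$ is a primitive root modulo $\rho^{2}$, it is a primitive root modulo $\rho^{b}$ for every $b$, so $\ord_{\rho^{b}}(r)=\varphi(\rho^{b})=2g$ and $\Phi_{\rho^{b}}$ is irreducible modulo $r$; consequently $f$ is irreducible over $\Q$. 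Ordinarity is immediate: the middle coefficient of $f$ is $a_{g}=mr+1$, and the hypothesis $m\not\equiv-1/r\pmod p$ says exactly that $p\nmid mr+1$. An irreducible ordinary $q$-Weil polynomial is, by the Honda--Tate theorem \cite{Honda,Tate,Waterhouse,WM}, the characteristic polynomial of a simple ordinary abelian variety $\A/\F_{q}$, necessarily of dimension $g$ because $[\Q(\pi):\Q]=2g$ for a root $\pi$ of $f$; this proves the first assertion.

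For part \eqref{IMain:2} I would argue as follows. Let $\pi$ be a root of $f$ and let $\zeta$ be a primitive $d$-th root of unity, where $d=\rho^{b-1}>1$. Then $f(\zeta\pi)=F((\zeta\pi)^{d})=F(\pi^{d})=0$, so $\zeta\pi$ is another root of $f$, distinct from $\pi$; since $f$ is irreducible, $\zeta\pi$ is a Galois conjugate of $\pi$ and $\pi^{d}=(\zeta\pi)^{d}$. Hence $\pi^{d}$ is a root of $F$, so $\Q(\pi^{d})\subseteq\Q(\pi)$ has degree at most $\rho-1<2g$. As $\pi^{d}$ is the Frobenius of $\A\otimes_{\F_{q}}\F_{q^{d}}$, this base change has a Weil number generating a proper subfield and is therefore non-simple, so $\A$ is not absolutely simple.

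For part \eqref{IMain:1} we have $b=1$, $\rho=5$, $g=2$ and $d=1$, so $f$ is an irreducible quartic and $K=\Q(\pi)$ is a quartic CM field. I would suppose, for contradiction, that $\A$ is not absolutely simple, so that $\Q(\pi^{n})\subsetneq K$ for some $n\ge 1$. Ordinarity forces $\pi^{n}$ to be non-real (otherwise $\pi$ would be $q^{1/2}$ times a root of unity, which is excluded for an ordinary Frobenius), so $\Q(\pi^{n})$ is neither $\Q$ nor the totally real quadratic subfield $K^{+}$; it must therefore be an imaginary quadratic subfield $M\subset K$. But a quartic CM field containing an imaginary quadratic subfield equals the compositum $M\cdot K^{+}$ of two quadratic fields, hence is biquadratic with Galois group $V_{4}$, which has exponent $2$. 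On the other hand, the reduction $f\equiv\Phi_{5}\pmod r$ is irreducible of degree $4$, so the Frobenius at $r$ acts as a $4$-cycle on the roots of $f$, whence $\Gal(f/\Q)$ contains an element of order $4$ and cannot be $V_{4}$. This contradiction shows that $\A$ is absolutely simple.
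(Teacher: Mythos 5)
Your proof is incomplete at its single most important step: the verification that all zeros of $f$ have modulus $q^{1/2}$. The reduction $f(t)=F(t^{d})$ with $d=\rho^{b-1}$, $Q=q^{d}$, and the substitution $S=u+Q/u$ are fine, but after that everything rests on the strict inequality
\[
\Bigl|\,(mr+1)+2\sum_{\ell=1}^{g'-1}Q^{\ell/2}\cos(\ell\theta_{j})\,\Bigr|<2Q^{g'/2},
\qquad \theta_{j}=j\pi/g',\quad 1\le j\le g'-1,\quad g'=\tfrac{\rho-1}{2},
\]
which you never prove; you only remark that the termwise bound is ``only barely sufficient'' and that one must exploit cancellation among the cosines. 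In fact the termwise bound is \emph{not} sufficient: with $mr+1$ at its permitted maximum $2Q-2Q^{1/2}$, the triangle inequality gives only
\[
(mr+1)+2\sum_{\ell=1}^{g'-1}Q^{\ell/2}\le 2Q^{g'/2},
\]
with equality whenever $Q=4$ or $g'=2$, so strict alternation of signs cannot be extracted without a genuine quantitative analysis of the values $\cos(\ell\theta_j)$ at each interior node. That analysis is the real content of the theorem --- the upper bound on $m$ exists precisely to make this step work --- and it is missing from your proposal. The paper sidesteps the difficulty by using a closed-condition criterion: Theorem \ref{Thm:LL} (Lakatos--Losonczi), applied to the reciprocal polynomial $f(q^{1/2}t)$ of degree $2g$ with $\delta=c_{2g}=q^{g}$, requires only the \emph{weak} inequality $2q^{g}\ge\sum_{j}|c_{j}|$, which the hypothesis on $m$ delivers by a short geometric-series computation ($S$ factors as a visibly nonnegative quantity once $q\ge 4$ and $g\ge 2\rho^{b-1}$), and it concludes that all zeros lie on the unit circle, equality cases included. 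To repair your argument you must either carry out the cancellation estimate in full or replace the sign-change step by such a criterion.

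The remainder of the proposal is essentially sound, and parts of it are independently interesting. Your irreducibility argument (reduction modulo $r$ gives $\Phi_{\rho^{b}}$, and a primitive root modulo $\rho^{2}$ is a primitive root modulo $\rho^{b}$ for all $b$) is exactly the paper's, and your part (2) coincides with the paper's appeal to Theorem \ref{Thm:HZ2}. Your part (1), however, is a genuinely different and valid route: the paper simply checks the explicit criterion of Theorem \ref{Thm:HZ1}, namely $a_1^{2}=1\notin\{0,\,q+a_2,\,2a_2,\,3a_2-3q\}$, which is immediate since $q\ge 4$, $a_2=mr+1\ge 1$, $2a_2$ is even, and $3\nmid 1$; your CM-field argument (a proper non-real power subfield forces $\Q(\theta)$ to be biquadratic, contradicting the $4$-cycle on the roots produced by irreducibility of $f$ modulo $r$) is heavier but correct, and has the merit of reusing the reduction modulo $r$. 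One smaller point: your passage from ``irreducible ordinary $q$-Weil polynomial'' to ``characteristic polynomial of a simple ordinary abelian variety of dimension $g$'' quietly uses that ordinary Weil numbers have trivial Brauer invariants, so that the characteristic polynomial is the minimal polynomial itself rather than a proper power of it; the paper makes this step honest by citing DiPippo--Howe (Theorem \ref{Thm:DH}) and Howe--Zhu (Theorem \ref{Thm:HZ}), and you should do the same.
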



\section{Preliminaries} \label{Section:Prelims}

For any integer $N\ge 1$, let $\Phi_N(x)$ denote the cyclotomic polynomial of index $N$.
\begin{thm}\label{Thm:Guerrier} {\rm \cite{Guerrier}}
  Let $r$ be a prime such that $r\nmid n$. Let $\ord_n(r)$ denote the order of $r$ modulo $n$. Then $\Phi_n(x)$ factors modulo $r$ into a product of $\phi(n)/\ord_n(r)$ distinct irreducible polynomials, each of degree $\ord_n(r)$. 
\end{thm}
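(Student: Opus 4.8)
The plan is to pass to an algebraic closure $\overline{\F_r}$ and analyze the roots of $\Phi_n(x)$ directly, exploiting the cyclic structure of the multiplicative groups $\F_{r^e}^*$ together with the action of Frobenius. Write $d := \ord_n(r)$. First I would record that because $r \nmid n$, the polynomial $x^n - 1$ has derivative $n x^{n-1}$, which is coprime to $x^n - 1$ modulo $r$ (the nonzero constant $n$ is a unit in $\F_r$, and $0$ is not a root of $x^n-1$); hence $x^n - 1$ is separable over $\F_r$. Since $\Phi_n(x)\mid x^n-1$ in $\Z[x]$, and this divisibility persists after reduction, $\Phi_n(x)$ is squarefree modulo $r$. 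This already guarantees that the irreducible factors appearing in its factorization are \emph{distinct}; what remains is to determine their common degree and their number.

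Next I would identify the roots of $\Phi_n(x)$ in $\overline{\F_r}$. From $x^n-1=\prod_{e\mid n}\Phi_e(x)$ together with separability modulo $r$, the roots of $\Phi_n(x)$ in $\overline{\F_r}$ are exactly the $\phi(n)$ elements of multiplicative order $n$, i.e.\ the primitive $n$-th roots of unity. Fix such a root $\zeta$. The essential observation is that $\zeta\in\F_{r^e}$ if and only if $\zeta$ lies in the cyclic group $\F_{r^e}^*$ of order $r^e-1$, which, because $\zeta$ has order $n$, happens if and only if $n\mid r^e-1$, that is, $r^e\equiv 1\pmod n$. The least positive $e$ with this property is precisely $e=d=\ord_n(r)$. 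Consequently $\F_r(\zeta)=\F_{r^d}$, so the minimal polynomial of $\zeta$ over $\F_r$ has degree $[\F_{r^d}:\F_r]=d$. Equivalently, one may argue through the Frobenius automorphism $x\mapsto x^r$: the conjugates of $\zeta$ are $\zeta,\zeta^r,\zeta^{r^2},\dots$, and the orbit first closes when $\zeta^{r^e}=\zeta$, i.e.\ $\zeta^{r^e-1}=1$, i.e.\ $n\mid r^e-1$, again yielding orbit size $d$. Since $\zeta$ was an arbitrary root of $\Phi_n(x)$, every irreducible factor of $\Phi_n(x)$ modulo $r$ has degree exactly $d=\ord_n(r)$.

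Finally I would count. The polynomial $\Phi_n(x)$ has degree $\phi(n)$, is squarefree modulo $r$, and each of its irreducible factors over $\F_r$ has degree $d$; hence the number of distinct irreducible factors equals $\phi(n)/d=\phi(n)/\ord_n(r)$, which is the claim. I expect the only genuinely nontrivial step to be the degree computation, namely the identification of the least $e$ with $\zeta\in\F_{r^e}$ with $\ord_n(r)$; once that identification is secured, separability and the counting follow immediately.
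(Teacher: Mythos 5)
Your proof is correct and complete: separability of $x^n-1$ modulo $r$ (from $r\nmid n$) gives distinctness of the factors, the identification of the least $e$ with $n\mid r^e-1$ gives the common degree $\ord_n(r)$ via $\F_r(\zeta)=\F_{r^d}$ or the Frobenius orbit, and the count $\phi(n)/\ord_n(r)$ follows. The paper itself states this result without proof, citing Guerrier's note, and your argument is essentially the standard one given there, so there is nothing to add.
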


\begin{cor}\label{Cor:Guerrier}
  Let $\rho\ge 3$ and $r$ be primes such that $r$ is a primitive root modulo $\rho^2$. Let $b\ge 1$ be an integer. If $f(x)\in \Z[x]$ is monic with $f(x)\equiv \Phi_{\rho^b}(x) \pmod{r}$, then $f(x)$ is irreducible over $\Q$.
  \end{cor}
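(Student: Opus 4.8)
The plan is to apply Theorem~\ref{Thm:Guerrier} with $n=\rho^b$ and then lift irreducibility from $\F_r[x]$ to $\Q[x]$. The first step, which I expect to be the crux, is to show that a primitive root modulo $\rho^2$ remains a primitive root modulo every higher power $\rho^b$. Since $r$ is a primitive root modulo $\rho^2$ we have $\gcd(r,\rho)=1$, so in particular $r\ne\rho$ and $r\nmid\rho^b$, which is exactly the hypothesis needed to invoke Guerrier's theorem. The order of $r$ modulo $\rho^b$ divides $\phi(\rho^b)=\rho^{b-1}(\rho-1)$, while reducing modulo $\rho^2$ shows that $\ord_{\rho^b}(r)$ is a multiple of $\ord_{\rho^2}(r)=\phi(\rho^2)=\rho(\rho-1)$. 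I would then invoke the standard fact (provable by induction from $r^{\rho-1}\not\equiv 1\pmod{\rho^2}$, which holds because $r$ is a primitive root modulo $\rho^2$) that $\ord_{\rho^b}(r)=\phi(\rho^b)$ for every $b\ge 1$; that is, $r$ is a primitive root modulo $\rho^b$.

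With $\ord_{\rho^b}(r)=\phi(\rho^b)$ in hand, Theorem~\ref{Thm:Guerrier} tells us that $\Phi_{\rho^b}(x)$ factors modulo $r$ into $\phi(\rho^b)/\ord_{\rho^b}(r)=1$ irreducible polynomial of degree $\ord_{\rho^b}(r)=\phi(\rho^b)$. In other words, $\Phi_{\rho^b}(x)$ is itself irreducible in $\F_r[x]$.

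It remains to transfer this to $f(x)$. Because $f(x)\equiv\Phi_{\rho^b}(x)\pmod r$ and $\Phi_{\rho^b}(x)$ is monic of degree $\phi(\rho^b)$, the monic polynomial $f(x)$ must also have degree $\phi(\rho^b)$ (its leading coefficient $1$ cannot vanish modulo $r$), so its reduction $\overline{f}(x)$ equals $\overline{\Phi_{\rho^b}}(x)$ in $\F_r[x]$ and is therefore irreducible. Finally I would argue that a monic polynomial in $\Z[x]$ whose reduction modulo a prime is irreducible of the same degree is irreducible over $\Q$: by Gauss's lemma, any nontrivial factorization over $\Q$ may be taken to be a factorization into monic integer polynomials of positive degree, whose reductions would yield a nontrivial factorization of $\overline{f}(x)$, contradicting the irreducibility just established. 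The main obstacle is genuinely the primitive-root lifting in the first paragraph; once $r$ is known to be a primitive root modulo every $\rho^b$, the remainder is a direct application of Theorem~\ref{Thm:Guerrier} together with the standard reduction-modulo-$r$ irreducibility criterion.
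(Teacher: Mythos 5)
Your proposal is correct and follows essentially the same route as the paper: lift the primitive root modulo $\rho^2$ to a primitive root modulo $\rho^b$, apply Theorem~\ref{Thm:Guerrier} to get irreducibility of $f$ modulo $r$, and conclude irreducibility over $\Q$. The only difference is that you supply the details (the inductive primitive-root lifting and the Gauss's-lemma reduction argument) that the paper compresses into a citation of Burton and a one-line conclusion.
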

  \begin{proof}
    Since $r$ is a primitive root modulo $\rho^2$, $r$ is a primitive root modulo $\rho^e$ for all $e\ge 1$ \cite{Burton}. That is, $\ord_{\rho^e}(r)=\phi\left(\rho^e\right)$. Thus, it follows from Theorem \ref{Thm:Guerrier} that $f(x)$ is irreducible modulo $r$, and hence, irreducible over $\Q$.
  \end{proof}
  \begin{defn}
    We say that $f(x)\in \R[x]$ is \emph{reciprocal} if $f(x)=x^{\deg{f}}f(1/x)$.
  \end{defn}
\begin{thm}{\rm \cite{LL}}\label{Thm:LL}
 Let $N\ge 2$ be an integer, and let
 \[P_N(x)=\sum_{j=0}^N c_jx^j\in \R[x]\]
 be reciprocal with $c_N\ne 0$. If there exists $\delta\in \R$ with $c_N\delta\ge 0$ and $\abs{c_N}\ge \abs{\delta}$, such that
 \[\abs{c_N+\delta}\ge \sum_{j=1}^{N-1}\abs{c_j+\delta-c_N},\]
 then all zeros of $P_N(x)$ are on the unit circle.
\end{thm}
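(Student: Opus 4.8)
The plan is to prove the criterion by reducing it to a zero-counting statement for a real cosine polynomial and then comparing $P_N$ with an explicit reciprocal polynomial all of whose zeros lie on the unit circle. First, since $P_N$ is reciprocal, its zeros are symmetric under $z\mapsto 1/z$; when $N$ is odd, reciprocity forces $P_N(-1)=0$ and $P_N(x)=(x+1)\tilde P(x)$ with $\tilde P$ reciprocal of even degree, so I may assume $N=2M$. On the unit circle I write $z=e^{i\theta}$ and pass to $F(\theta):=e^{-iM\theta}P_N(e^{i\theta})=c_M+2\sum_{k=1}^{M}c_{M+k}\cos(k\theta)$, which is real by reciprocity. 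As a polynomial of degree $M$ in $\cos\theta$, $F$ has at most $2M=N$ zeros in $[0,2\pi)$, and these account for all zeros of $P_N$ on the circle; hence it suffices to exhibit $N$ sign changes of $F$ on $[0,2\pi)$, for then $P_N$ has its full complement of $N$ zeros on $\abs{z}=1$.

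Second, I introduce the comparison polynomial
\[
S(z):=c_N\bigl(1+z^N\bigr)+(c_N-\delta)\sum_{j=1}^{N-1}z^j,
\]
so that $P_N(z)=S(z)+D(z)$ with $D(z):=\sum_{j=1}^{N-1}(c_j+\delta-c_N)z^j$; the hypothesis is exactly the statement that $\sup_{\abs z=1}\abs{D(z)}\le \Sigma:=\sum_{j=1}^{N-1}\abs{c_j+\delta-c_N}\le\abs{c_N+\delta}$. The heart of this step is to show $S$ has all $N$ of its zeros on the unit circle. A direct computation gives $(z-1)S(z)=c_N\bigl(z^{N+1}-\lambda z^N+\lambda z-1\bigr)$, where $\lambda:=\delta/c_N$ satisfies $0\le\lambda\le1$ by the conditions $c_N\delta\ge0$ and $\abs{c_N}\ge\abs{\delta}$. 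For $\lambda\in[0,1)$ a zero $z\ne1$ of $S$ satisfies $z^N=(1-\lambda z)/(z-\lambda)$; since $z\mapsto(z-\lambda)/(1-\lambda z)$ is a Blaschke factor, the right-hand side has modulus $<1,=1,>1$ exactly when $\abs z>1,=1,<1$, whereas $\abs{z^N}=\abs z^{N}$ behaves oppositely, so equality of moduli forces $\abs z=1$. The case $\lambda=1$ is immediate from the factorization $S(z)=c_N(z^N+1)$. Thus all zeros of $S$ lie on the circle.

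Third, I transfer this to $P_N$ through the cosine polynomial $G(\theta):=e^{-iM\theta}S(e^{i\theta})$, which has exactly $N$ simple zeros in $[0,2\pi)$ because $S$ does on the circle; consequently its $N$ successive local extrema alternate in sign. Writing $\eta(\theta):=e^{-iM\theta}D(e^{i\theta})$, so that $F=G+\eta$ and $\sup\abs{\eta}=\sup_{\abs z=1}\abs{D}\le\Sigma\le\abs{c_N+\delta}$, I would show that every local extremum of $\abs{G}$ is at least $\abs{c_N+\delta}$. Granting this, at each of the $N$ extrema the dominant term $G$ keeps its sign after adding $\eta$ (or $F$ vanishes there, which already places a zero on the circle), so $F$ has at least $N$ sign changes; by the degree bound it has exactly $N$, and every zero of $P_N$ lies on the unit circle.

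The hard part will be the extremal estimate of the previous paragraph: that no local extremum of $\abs{G}$ drops below $\abs{c_N+\delta}$. My plan is to use the closed form
\[
G(\theta)=(c_N-\delta)\,\frac{\sin\bigl((N+1)\theta/2\bigr)}{\sin(\theta/2)}+2\delta\cos(N\theta/2),
\]
in which the first summand is $(c_N-\delta)$ times a Dirichlet kernel whose local extrema decrease in magnitude as $\theta$ runs from $0$ to $\pi$, the smallest occurring at $\theta=\pi$. A short computation gives $G(\pi)=(-1)^M(c_N+\delta)$, so $\abs{c_N+\delta}$ is the candidate minimal extremal value; the conditions $\abs{\delta}\le\abs{c_N}$ and $c_N\delta\ge0$ (equivalently $\lambda\in[0,1]$) are precisely what is needed to ensure the bounded correction $2\delta\cos(N\theta/2)$ neither cancels this value nor creates a smaller extremum elsewhere. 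Verifying this monotonicity-and-correction bound carefully, and checking that the zeros of $S$ are simple so the sign-change count is valid, are the remaining technical points; everything else follows from the reduction and the Blaschke argument above.
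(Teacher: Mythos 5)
Your framework is sound as far as it goes: the paper itself gives no proof of this theorem (it is quoted from \cite{LL}), and your decomposition $P_N=S+D$ with $S(z)=c_N\left(1+z^N\right)+(c_N-\delta)\sum_{j=1}^{N-1}z^j$ is exactly the right comparison object, and your Blaschke-factor argument that all zeros of $S$ lie on the unit circle is correct. But the proof has a genuine gap at its crux: the extremal estimate --- that every local extremum of $\abs{G}$ is at least $\abs{c_N+\delta}$ --- is asserted, not proved, and the route you sketch for it would not succeed as stated. The critical points of $G$ are not those of the Dirichlet kernel, so ``the kernel's extrema decrease toward $\pi$'' plus a bound on the correction $2\delta\cos(N\theta/2)$ does not control the extrema of the sum; indeed, when $\delta$ is close to $c_N$ the kernel term carries the small factor $c_N-\delta$ and the ``correction'' dominates, and naive triangle-inequality bounds at natural evaluation points (e.g.\ where $\sin((N+1)\theta/2)=\pm1$) produce quantities that can fall below $\abs{c_N+\delta}$. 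Two smaller holes: for odd $N$ you factor $P_N=(x+1)\tilde P$ without checking that the hypothesis transfers to $\tilde P$ (this reduction is unnecessary, since $e^{-iN\theta/2}P_N\left(e^{i\theta}\right)$ is real for every $N$); and your sign-change count breaks down in the equality case $\sup\abs{\eta}=\abs{c_N+\delta}$, where $F$ may only touch zero --- the parenthetical ``or $F$ vanishes there'' does not by itself recover the full count of $N$ zeros, and one needs a multiplicity or limiting (Hurwitz-type) argument.

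The irony is that your own decomposition contains a one-line identity that closes the gap, and it is the route of \cite{LL} (hence ``circular interlacing''): evaluate at the $N$-th roots of unity rather than at extrema. At $\theta_k=2k\pi/N$ with $1\le k\le N-1$ one has
\[
\frac{\sin\left((N+1)\theta_k/2\right)}{\sin\left(\theta_k/2\right)}=(-1)^k,
\qquad
\cos\left(N\theta_k/2\right)=(-1)^k,
\]
so that $G(\theta_k)=(-1)^k\left((c_N-\delta)+2\delta\right)=(-1)^k(c_N+\delta)$ \emph{exactly}, while $G(0)=(N+1)c_N-(N-1)\delta\ge c_N+\delta$ (assuming without loss of generality $c_N>0$, so $0\le\delta\le c_N$). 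Since $\sup\abs{\eta}\le\Sigma\le\abs{c_N+\delta}$, this yields $(-1)^kF(\theta_k)\ge 0$ at $N$ points around the circle, forcing a zero of $F$ in each of the $N$ closed arcs between consecutive $N$-th roots of unity (with the endpoint-zero case handled by a small perturbation of the coefficients and continuity of zeros); the degree bound then places all $N$ zeros of $P_N$ on the unit circle. This replaces your entire ``hard part,'' and as a byproduct it proves your extremal claim: since $G$ takes the values $\pm(c_N+\delta)$ alternately at the $\theta_k$ and has only $N$ zeros, every local extremum of $\abs{G}$ is automatically at least $\abs{c_N+\delta}$.
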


 \begin{thm}{\rm \cite{DH}}\label{Thm:DH}
Let $n$ and $g$ be positive integers.
Let $p$ be a prime, and let $q=p^n$.
  Suppose that $f(t)\in \Z[t]$ is monic with $\deg(f)=2g$, and that $a_g$ is the coefficient of $t^g$. If all zeros of $f(t)$ have modulus $q^{1/2}$ and $\gcd\left(a_g,p\right)=1$, then $f(t)$ is the characteristic polynomial $f_{\A}(t)$ of an ordinary abelian variety $\A$ of dimension $g$ over $k$. 
\end{thm}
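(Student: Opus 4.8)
The plan is to derive the statement from the Honda--Tate theorem \cite{Honda,Tate,Waterhouse,WM}, with the hypothesis $\gcd(a_g,p)=1$ serving only to force the \emph{ordinary} distribution of $p$-adic slopes. First I would record that the hypotheses already pin down the shape of $f$: since $f\in\Z[t]$ is monic of degree $2g$ with every zero of modulus $q^{1/2}$, the map $\alpha\mapsto q/\alpha$ (which agrees with complex conjugation on the zeros) permutes them, so $f(t)=q^{-g}t^{2g}f(q/t)$. This is exactly the palindromic form \eqref{f}, and it identifies $a_g$ as the middle coefficient. Next I would fix a prime $\p\mid p$ of $\overline{\Q}$, normalize the valuation $v$ so that $v(q)=1$, and extend $v$ to the zeros of $f$. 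The pairing $\alpha,\,q/\alpha$ with $v(\alpha)+v(q/\alpha)=1$ shows that the multiset of zero-valuations is symmetric about $1/2$, hence the Newton polygon of $f$ with respect to $v$ is symmetric.

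The crux of the argument is to convert $\gcd(a_g,p)=1$ into a statement about these slopes. The height of the Newton polygon at abscissa $k=g$ equals the sum of the $g$ smallest zero-valuations and is at most $v(a_g)$. Because $a_g\in\Z$ with $p\nmid a_g$ we have $v(a_g)=0$, while all valuations lie in $[0,1]$ so that sum is nonnegative; hence it is $0$ and the $g$ smallest valuations all vanish. By the symmetry about $1/2$ the remaining $g$ valuations are all equal to $1$. Thus $f$ has the ordinary Newton polygon, with slopes $0$ and $1$ each occurring with multiplicity $g$.

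To realize $f$ I would factor $f=\prod_i f_i^{m_i}$ into distinct monic irreducibles over $\Q$; each $f_i$ is the minimal polynomial of a Weil $q$-number $\pi_i$, and the slope computation just made says every zero of $f_i$ has valuation $0$ or $1$, i.e.\ $v(\pi_i)/v(q)\in\{0,1\}$ at each place above $p$. By Honda--Tate there is a simple abelian variety $A_i/\F_q$ with Frobenius $\pi_i$, and Tate's theorem computes the invariants of $D_i=\mathrm{End}^0(A_i)$ over $E_i=\Q(\pi_i)$: at a place $w\mid p$ one has $\mathrm{inv}_w(D_i)\equiv \frac{w(\pi_i)}{w(q)}\,[E_{i,w}:\Q_p]\pmod 1$, which vanishes since the slope is $0$ or $1$; and ordinariness rules out $\pi_i=q/\pi_i$ (which would give slope $1/2$), so $E_i$ is a CM field with no real places and the infinite invariants vanish as well. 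Hence $D_i=E_i$, the Brauer index is $1$, and $f_{A_i}=f_i$. Taking $A=\prod_i A_i^{m_i}$ then yields an ordinary abelian variety with $f_A=f$ and $\dim A=\tfrac12\deg f=g$.

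The main obstacle is not in any of these steps but in the input I am granting: the Honda--Tate theorem itself --- the surjectivity of $A\mapsto\pi_A$ via CM lifting and Tate's determination of $\mathrm{End}^0$ together with its local invariants --- is the deep ingredient. Granting it, the only genuine content is the Newton-polygon computation and the verification that ordinariness forces every Brauer index to be $1$, so that no multiplicity obstruction survives and $f$ is realized exactly; this is precisely where $\gcd(a_g,p)=1$ does the work. As a cross-check I would note that Deligne's equivalence between ordinary abelian varieties over $\F_q$ and Deligne modules gives an independent route to the same conclusion, under which ordinary $q$-polynomials correspond directly to characteristic polynomials.
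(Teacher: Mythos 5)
The paper gives no proof of this theorem---it is imported verbatim from \cite{DH}---and your reconstruction is correct and is essentially the argument of DiPippo and Howe themselves: the Newton-polygon computation showing that $\gcd(a_g,p)=1$ (so $v(a_g)=0$, while the sum of the $g$ smallest root valuations is nonnegative and bounded above by $v(a_g)$) forces slopes $0$ and $1$ each with multiplicity $g$, followed by Honda--Tate applied to each irreducible factor $f_i$, with the slope condition killing the invariants of $D_i$ at places over $p$ and ruling out real Weil numbers $\pm\sqrt{q}$ so that all Brauer indices are $1$ and $f_{A_i}=f_i$ exactly. One cosmetic remark: your asserted functional equation $f(t)=q^{-g}t^{2g}f(q/t)$ can fail by a sign when $t^2-q$ divides $f$ to odd multiplicity (e.g.\ $f=t^2-q$), but this is harmless, since only the stability of the root multiset under $\alpha\mapsto q/\alpha$ is used, and the slope computation ultimately excludes such factors anyway.
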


By the Honda-Tate theorem, we have the following:
 \begin{thm}{\rm \cite{Howe,HZ}}\label{Thm:HZ}
 Let $\A$ be an ordinary abelian variety of dimension $g$ over $k$, and let $f_{\A}(t)$ be the characteristic polynomial of $\A$.
 Then $\A$ is simple if and only if $f_{\A}(t)$ is irreducible.
  \end{thm}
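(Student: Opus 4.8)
The plan is to prove the two implications separately. The implication ``$f_{\A}(t)$ irreducible $\Rightarrow$ $\A$ simple'' requires no ordinariness hypothesis and is the easy half; the converse is where the structure of the endomorphism algebra, and hence the ordinary hypothesis, is needed.

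First I would prove the contrapositive of the easy direction. If $\A$ is not simple, then it has a proper nonzero abelian subvariety, and by Poincar\'e reducibility $\A$ is isogenous to a product $\B_1\times \B_2$ with $\dim \B_i\ge 1$. Since isogenous abelian varieties share the same characteristic polynomial of Frobenius and this polynomial is multiplicative over products, $f_{\A}(t)=f_{\B_1}(t)\,f_{\B_2}(t)$ with $\deg f_{\B_i}=2\dim \B_i\ge 2$. This is a nontrivial factorization of $f_{\A}(t)$ in $\Z[t]$, so $f_{\A}(t)$ is reducible.

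For the converse, assume $\A$ is simple and ordinary, and let $\pi$ be its Frobenius, a Weil $q$-number with minimal polynomial $m_{\pi}(t)\in\Z[t]$ (irreducible over $\Q$). By Tate's theorem the endomorphism algebra $D:=\operatorname{End}^0(\A)$ is a division algebra with center $K:=\Q(\pi)$, and the characteristic polynomial factors as $f_{\A}(t)=m_{\pi}(t)^{e}$, where $e^2=[D:K]$ and $2g=e\,[K:\Q]$. Hence $f_{\A}(t)$ is irreducible exactly when $e=1$, that is, exactly when $D$ equals its center $K$. So it suffices to prove $e=1$, and for this I would show that every local Hasse invariant of $D$ vanishes, since a central division algebra over a number field that splits at every place is trivial. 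The invariants at finite places $v\nmid p$ vanish automatically. For the archimedean places I would use ordinariness to rule out real places of $K$: a real place would force a real conjugate of $\pi$ equal to $\pm q^{1/2}$, hence $m_{\pi}(t)\in\{\,t\mp q^{1/2},\,t^2-q\,\}$ and every root of $f_{\A}(t)$ equal to $\pm q^{1/2}$, so every $p$-adic slope would be $1/2$, contradicting the ordinary hypothesis (equivalently, $p\mid a_g$, contrary to Theorem~\ref{Thm:DH}). Thus $K$ is a totally imaginary CM field and the archimedean invariants vanish. The decisive case is $v\mid p$, where the Honda--Tate formula gives $\operatorname{inv}_v(D)\equiv \frac{v(\pi)}{v(q)}\,[K_v:\Q_p]\pmod 1$. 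Ordinariness means the Newton polygon of $f_{\A}(t)$ has only the slopes $0$ and $1$, so $v(\pi)/v(q)\in\{0,1\}$ for every $v\mid p$; in either case the right-hand side is an integer and $\operatorname{inv}_v(D)=0$. With all invariants zero, $D=K$, so $e=1$ and $f_{\A}(t)=m_{\pi}(t)$ is irreducible.

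I expect the main obstacle to be the $v\mid p$ computation in the last paragraph: this is the unique place where the ordinary hypothesis enters in an essential way, reducing the local invariants to integers through the slopes-$\{0,1\}$ description of the Newton polygon (equivalently, the coprimality $\gcd(a_g,p)=1$ of Theorem~\ref{Thm:DH}). Once Tate's structure theorem for $\operatorname{End}^0(\A)$ and the Honda--Tate invariant formula are granted, the remaining steps are formal.
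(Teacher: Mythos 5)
The paper does not prove this statement at all: Theorem~\ref{Thm:HZ} is quoted with a citation to \cite{Howe,HZ}, so there is no internal proof to compare against. Your argument is correct, and it is essentially the standard Honda--Tate proof that underlies the cited references: the easy direction via Poincar\'e reducibility and multiplicativity of $f_{\A}$ over products, and the converse via Tate's structure theorem ($f_{\A}=m_{\pi}^{e}$ with $e^{2}=[D:K]$ and $2g=e\,[K:\Q]$) together with the vanishing of all local invariants of $D$ --- the places $v\nmid p$ splitting automatically, real places excluded because a real conjugate $\pi=\pm q^{1/2}$ would force all Newton slopes equal to $1/2$, and the places $v\mid p$ handled by the slope description of ordinariness (slopes $\{0,1\}$ make $\operatorname{inv}_v(D)\equiv \frac{v(\pi)}{v(q)}[K_v:\Q_p]$ an integer). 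This is precisely where the ordinary hypothesis enters, as you note, and your identification of the decisive step is accurate; in effect you have reconstructed the proof that in the ordinary simple case $\operatorname{End}^0(\A)$ equals the CM field $\Q(\pi)$, which is the content the paper imports from \cite{Howe,HZ}.
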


  The following theorem gives an easy test for determining whether a simple ordinary abelian variety $\A$ of dimension 2 over $k$ is absolutely simple.
  \begin{thm}{\rm \cite{HZ,MN}}\label{Thm:HZ1}
   Let $\A$ be a simple ordinary abelian variety of dimension 2 over $k$ with characteristic polynomial $f_{\A}(t)=t^4+a_1t^3+a_2t^2+a_1qt+q^2$. Then $\A$ is absolutely simple if and only if $a_1^2\not \in \{0,q+a_2,2a_2,3a_2-3q\}$.
   \end{thm}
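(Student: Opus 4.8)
The plan is to prove the equivalence by first recasting absolute simplicity as a condition on ratios of Frobenius eigenvalues, and then converting that condition into the four displayed polynomial relations.

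\textbf{Step 1 (reduction to root-of-unity ratios).} Let $\pi_1,\bar\pi_1,\pi_2,\bar\pi_2$ be the four roots of $f_{\A}(t)$, with $\bar\pi_j=q/\pi_j$ and $|\pi_j|=q^{1/2}$; since $\A$ is simple these are distinct, and since $\A$ is ordinary it remains ordinary over every $\F_{q^n}$. The Frobenius of $\A_{\F_{q^n}}$ has characteristic polynomial $\prod_i(t-\pi_i^n)$, which, being the characteristic polynomial of an endomorphism, is a power of the minimal polynomial of $\pi_1^n$; hence it is irreducible exactly when the four powers $\pi_1^n,\bar\pi_1^n,\pi_2^n,\bar\pi_2^n$ are distinct. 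By Theorem~\ref{Thm:HZ}, $\A_{\F_{q^n}}$ is simple iff this polynomial is irreducible. Therefore $\A$ is absolutely simple iff for no $n$ do two of the eigenvalues have equal $n$-th powers, i.e.\ iff no ratio of two distinct roots of $f_{\A}$ is a root of unity.

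\textbf{Step 2 (the symmetric functions).} Writing $\pi_j=q^{1/2}e^{i\theta_j}$ and $\beta_j=\pi_j+\bar\pi_j$, the numbers $\beta_1,\beta_2$ are the roots of $s^2+a_1s+(a_2-2q)$, so $\beta_1+\beta_2=-a_1$ and $\beta_1\beta_2=a_2-2q$. The ratios of distinct roots are, up to inversion, $e^{2i\theta_1},e^{2i\theta_2},e^{i(\theta_1-\theta_2)}$ and $e^{i(\theta_1+\theta_2)}$. Setting $u=\cos(\theta_1-\theta_2)$ and $v=\cos(\theta_1+\theta_2)$, a direct computation from $\cos\theta_j=\beta_j/(2q^{1/2})$ gives
\[
u+v=\frac{a_2-2q}{2q},\qquad uv=\frac{a_1^2-2a_2}{4q},
\]
so $u$ and $v$ are the two roots of a quadratic with rational coefficients; in particular each has degree at most $2$ over $\Q$.

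\textbf{Step 3 (excluding the sporadic cases via ordinariness).} This is where the hypotheses do the real work. If $e^{2i\theta_j}$ is a root of unity, then some power $\pi_j^M=\bar\pi_j^M$ is real and equal to $\pm q^{M/2}$, an eigenvalue of slope $1/2$; this is impossible for the ordinary variety $\A_{\F_{q^M}}$, so the two self-ratios never occur. If instead $e^{i(\theta_1-\theta_2)}=\zeta$ is a primitive $N$-th root of unity, then $2u=\zeta+\zeta^{-1}$, which for $N\ge 3$ has degree $\phi(N)/2$ over $\Q$; with $\deg u\le 2$ this forces $\phi(N)\le 4$, so in all cases $N\in\{1,2,3,4,5,6,8,10,12\}$, and likewise for $v$. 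Here $N=1$ would collapse two roots of $f_{\A}$, contradicting simplicity. For each $N\in\{5,8,10,12\}$ the number $2u$ is a quadratic irrational, so $v$ is its conjugate; this pins the pair $(\cos\theta_1,\cos\theta_2)$ and yields $\beta_1\beta_2=a_2-2q\in\{-q,0,q\}$, hence $a_2\in\{q,2q,3q\}$ and $p\mid a_2$, contradicting ordinariness (Theorem~\ref{Thm:DH}). Thus only $N\in\{2,3,4,6\}$ survive, equivalently $u$ or $v$ equals one of $-1,-\tfrac12,0,\tfrac12$, the value $+1$ being excluded as before.

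\textbf{Step 4 (translation to the four relations).} If $c\in\{u,v\}$ equals one of these four values, then from the quadratic in Step~2 one gets $a_1^2=2a_2+2(a_2-2q)c-4qc^2$, and substituting $c=-1,0,\tfrac12,-\tfrac12$ gives $a_1^2\in\{0,\,2a_2,\,3a_2-3q,\,q+a_2\}$, respectively. Each implication is reversible: if $a_1^2$ equals one of these, then the corresponding $c$ is a root of the Step~2 quadratic, so $u$ or $v$ takes that value, producing a root-of-unity cross-ratio and hence a proper splitting of $\A$ over $\F_{q^{N}}$ with $N\in\{2,3,4,6\}$. Combining Steps~1--4 shows that $\A$ is absolutely simple iff $a_1^2\notin\{0,q+a_2,2a_2,3a_2-3q\}$. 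I expect the crux to be Step~3: ruling out the orders $N\in\{5,8,10,12\}$ is precisely where the ordinariness condition $p\nmid a_2$ is indispensable, and it must be verified through the finitely many exceptional angle configurations rather than by one uniform argument.
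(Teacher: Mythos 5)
The paper does not actually prove this theorem; it imports it from \cite{HZ} and the appendix of \cite{MN}, so there is no internal proof to compare against. Judged on its own, your argument is correct and is in essence the argument of the cited sources: reduce absolute simplicity to the condition that no ratio of distinct Frobenius eigenvalues is a root of unity (this is exactly the content of Theorem \ref{Thm:HZ2}), control the degrees of the relevant cosines through the rational symmetric functions $u+v=(a_2-2q)/(2q)$ and $uv=(a_1^2-2a_2)/(4q)$, use ordinariness to eliminate the slope-$1/2$ self-ratios and the orders $N\in\{5,8,10,12\}$, and translate $c\in\{-1,-\tfrac12,0,\tfrac12\}$ into the four displayed relations via $a_1^2=2a_2+2(a_2-2q)c-4qc^2$. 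I verified the computations in Steps 2--4, including the exceptional cases ($N=5$ gives $a_2=q$, $N=10$ gives $a_2=3q$, $N=8,12$ give $a_2=2q$, all contradicting $p\nmid a_2$), and they check out; you are also right that ordinariness is exactly what makes Step 3 work.

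Two justifications need repair, though neither is a genuine gap. First, in Step 1 the assertion that the characteristic polynomial of an endomorphism ``is a power of the minimal polynomial'' is false in general: if $\A_{\F_{q^n}}$ splits into non-isogenous factors, its characteristic polynomial is reducible yet can have distinct roots. What saves the step is that $f_{\A}$ is irreducible (by Theorem \ref{Thm:HZ}, since $\A$ is simple and ordinary), so the four $\pi_i$ are the conjugates of a single generator $\pi$ of the quartic field $K=\Q(\pi)$, and $\prod_i\left(t-\pi_i^n\right)$ is then the field polynomial of $\pi^n$ in $K$, hence equals $m(t)^{[K:\Q(\pi^n)]}$ with $m$ the minimal polynomial of $\pi^n$; alternatively you could bypass the whole step by invoking Theorem \ref{Thm:HZ2} verbatim. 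Second, in Step 3 you cite Theorem \ref{Thm:DH} for ``ordinary $\Rightarrow p\nmid a_2$,'' but as stated that theorem gives only the converse implication (coprimality plus the modulus condition implies ordinary). The direction you need is the standard Newton-polygon fact: for an ordinary $g$-dimensional variety exactly $g$ of the eigenvalues are $p$-adic units, and the unique term in $a_g$ formed from all $g$ unit eigenvalues makes $a_g$ itself a $p$-adic unit. With those two citations corrected, the proof is complete.
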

The next theorem addresses when a simple ordinary abelian variety $\A$ of arbitrary dimension over $k$ is absolutely simple.
  \begin{thm}{\rm \cite{HZ}}\label{Thm:HZ2}
    Let $\A$ be a simple ordinary abelian variety over $k$ with characteristic polynomial $f_{\A}(t)$. Suppose that $f_{\A}(\theta)=0$. Then
    $\A$ is absolutely simple if and only if $\Q(\theta)=\Q\left(\theta^d\right)$ for all integers $d>0$.
  \end{thm}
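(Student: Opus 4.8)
The plan is to convert absolute simplicity into the simplicity of every finite base extension of $\A$, and then read off the field criterion from the Honda--Tate correspondence applied to powers of the Frobenius. Write $\pi$ for the Frobenius endomorphism of $\A$ over $k=\F_q$, so that $f_{\A}(\pi)=0$ and $\pi$ is a root of $f_{\A}$; the hypotheses say $\A$ is ordinary and simple, so by Theorem~\ref{Thm:HZ} the polynomial $f_{\A}(t)$ is irreducible of degree $2g$, the field $K:=\Q(\pi)$ is a CM field of degree $2g$, and, by ordinariness, the endomorphism algebra of $\A$ is the commutative field $K$.

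First I would reduce to finite extensions. Any abelian subvariety of $\A$ over the algebraic closure $\overline{k}$ is already defined over some finite extension $\F_{q^{d}}$, so $\A$ is absolutely simple if and only if the base extension $\A\otimes_{k}\F_{q^{d}}$ is simple for every integer $d\ge 1$. The Frobenius endomorphism of $\A\otimes_{k}\F_{q^{d}}$ is $\pi^{d}$, and its characteristic polynomial is $\prod_{\sigma}\bigl(t-\sigma(\pi^{d})\bigr)=m_{\pi^{d}}(t)^{e}$, where the product runs over the $2g$ complex embeddings $\sigma$ of $K$, the polynomial $m_{\pi^{d}}$ is the minimal polynomial of $\pi^{d}$ over $\Q$, and $e=[\Q(\pi):\Q(\pi^{d})]$.

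The key step is to apply the Honda--Tate dictionary to the Weil number $\pi^{d}$. Since ordinariness is preserved under base extension, $\A\otimes_{k}\F_{q^{d}}$ is again ordinary, and so is its simple factor $\B$ attached to $\pi^{d}$; by Theorem~\ref{Thm:HZ} applied over $\F_{q^{d}}$, this $\B$ has irreducible characteristic polynomial $m_{\pi^{d}}(t)$ and hence $2\dim\B=[\Q(\pi^{d}):\Q]$. As the full characteristic polynomial $m_{\pi^{d}}(t)^{e}$ is a power of this single irreducible, $\A\otimes_{k}\F_{q^{d}}$ is isogenous to $\B^{e}$; comparing dimensions gives $e\cdot[\Q(\pi^{d}):\Q]=2g=[\Q(\pi):\Q]$, so indeed $e=[\Q(\pi):\Q(\pi^{d})]$. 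Thus $\A\otimes_{k}\F_{q^{d}}$ is simple exactly when $e=1$, that is, when $\Q(\pi^{d})=\Q(\pi)$. Combining with the reduction, $\A$ is absolutely simple if and only if $\Q(\pi)=\Q(\pi^{d})$ for all $d\ge 1$; and since any root $\theta$ of $f_{\A}$ is a conjugate of $\pi$, the fields $\Q(\theta)$ and $\Q(\theta^{d})$ are conjugate to $\Q(\pi)$ and $\Q(\pi^{d})$, so the condition is independent of the chosen root, giving the stated equivalence.

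I expect the main obstacle to be the key step: justifying that the multiplicity of the simple factor after base change is exactly the field index $[\Q(\pi):\Q(\pi^{d})]$, with no extra multiplicity introduced by a noncommutative endomorphism algebra. This is precisely where ordinariness is essential, since it forces the endomorphism algebra to be the commutative CM field $K$ and lets Theorem~\ref{Thm:HZ} identify the simple factor's characteristic polynomial as $m_{\pi^{d}}(t)$ itself; once that is in hand, the multiplicity falls out of the dimension count above. The descent of subvarieties to a finite extension used in the reduction step is standard, but should also be stated carefully.
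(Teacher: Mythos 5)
The paper offers no proof of this statement---it is imported verbatim from Howe and Zhu \cite{HZ}---so there is no internal argument to compare against; your proof is correct, and it is essentially the standard Honda--Tate argument underlying the cited result: base-change Frobenius $\pi^d$, characteristic polynomial $m_{\pi^d}(t)^{e}$ with $e=[\Q(\pi):\Q(\pi^d)]$, preservation of ordinariness, and the ``simple iff irreducible'' criterion, together with the reduction of absolute simplicity to simplicity over every $\F_{q^d}$. One small streamlining: since $\A\otimes_{k}\F_{q^{d}}$ is itself ordinary, Theorem~\ref{Thm:HZ} applies to it directly, yielding ``simple iff $m_{\pi^{d}}(t)^{e}$ is irreducible iff $e=1$'' without the detour through the simple factor $\B$ of the Honda--Tate decomposition---though your detour is also valid and is what furnishes the dimension count identifying $e$ with the field index.
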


  \section{The Proof of Theorem \ref{Thm:Main1}}
  \begin{proof}
   We first prove that $f(t)$ is a $q$-polynomial. To accomplish this task, it is enough to show that all zeros of $f(t)$ have modulus $q^{1/2}$, since it is obvious that $f(t)$ has the form \eqref{f}.
    Let $a_g:=mr+1$. Since
    \[\left \lfloor \dfrac{g-1}{\rho^{b-1}}\right \rfloor=\dfrac{g}{\rho^{b-1}}-1=\dfrac{\rho-3}{2},\]
    we have from \eqref{Eq:ajconditions} that
\[f(t)=t^{2g}+a_gt^g+q^g+\sum_{u=1}^{(\rho-3)/2}\left(t^{2g-u\rho^{b-1}}+q^{g-u\rho^{b-1}}t^{u\rho^{b-1}}\right).\]
Then
 \begin{align*}
    F(t):&=f\left(q^{1/2}t\right)\\
    &=q^gt^{2g}+q^{g/2}a_gt^g+q^g+\sum_{u=1}^{(\rho-3)/2}q^{(2g-u\rho^{b-1})/2}\left(t^{2g-u\rho^{b-1}}+t^{u\rho^{b-1}}\right)
    \end{align*}
    is reciprocal. Let
    \[S=\abs{c_N+\delta}-\sum_{j=1}^{N-1}\abs{c_j+\delta-c_N},\]
    where $N=2g$, $c_N=\delta=q^g$ and $c_j$ is the coefficient of $t^j$ in $F(t)$, for $j=1,2,\ldots, N-1$. Then, using the fact that
    \[a_g\le 2q^{\rho^{b-1}/2}\left(q^{\rho^{b-1}/2}-1\right),\] we have
   \begin{align*}
    S&=2q^g-2\left(q^{\left(2g-\rho^{b-1}\right)/2}+q^{\left(2g-2\rho^{b-1}\right)/2}+\cdots +q^{\left(2g-\left(\frac{\rho-3}{2}\right)\rho^{b-1}\right)/2}\right)-a_gq^{g/2}\\
    &=2q^g-2q^{\left(2g-\left(\frac{\rho-3}{2}\right)\rho^{b-1}\right)/2}\left(\left(q^{\rho^{b-1}/2}\right)^{(\rho-5)/2}+ \cdots + \left(q^{\rho^{b-1}/2}\right)+1\right)-a_gq^{g/2}\\
    &=2q^g-2q^{\left(2g-\left(\frac{\rho-3}{2}\right)\rho^{b-1}\right)/2}\left(\dfrac{\left(q^{\rho^{b-1}/2}\right)^{\frac{\rho-3}{2}}-1}
    {q^{\rho^{b-1}/2}-1}\right)-a_gq^{g/2}\\
    &\ge 2q^g-2q^{\left(2g-\left(\frac{\rho-3}{2}\right)\rho^{b-1}\right)/2}\left(\dfrac{\left(q^{\rho^{b-1}/2}\right)^{\frac{\rho-3}{2}}-1}
    {q^{\rho^{b-1}/2}-1}\right)-2q^{\rho^{b-1}/2}\left(q^{\rho^{b-1}/2}-1\right)q^{g/2}\\
    &=\dfrac{2q^{(2g+\rho^{b-1})/2}-4q^g-2q^{(g+3\rho^{b-1})/2}+4q^{(g+2\rho^{b-1})/2}}{q^{\rho^{b-1}/2}-1}\\
    &=\dfrac{2q^{(g+2\rho^{b-1})/2}\left(q^{(g-2\rho^{b-1})/2}-1\right)\left(q^{\rho^{b-1}/2}-2\right)}{q^{\rho^{b-1}/2}-1}\\
    &\ge 0,
    \end{align*}
    since $g\ge 2\rho^{b-1}$ and $q\ge 4$. Hence, from Theorem \ref{Thm:LL}, all zeros of $F(t)$ are on the unit circle, and consequently, all zeros of $f(t)$ have modulus $q^{1/2}$. 

        We now show that $f(t)$ is a Weil polynomial. In particular, we prove that $f(t)=f_{\A}(t)$ for a simple ordinary abelian variety of dimension $g$ over $k$. Observe that $\gcd\left(a_g,p\right)=1$ since $m\not \equiv -1/r\pmod{p}$, and so we deduce from Theorem \ref{Thm:DH} that $f(t)=f_{\A}(t)$, where $\A$ is an ordinary abelian variety of dimension $g$ over $k$.
        Since $r$ is a primitive root modulo $\rho^2$ and $f_{\A}(t)\equiv \Phi_{\rho^b}(t) \pmod{r}$, it follows from Corollary \ref{Cor:Guerrier}  that $f_{\A}(t)$ is irreducible over $\Q$. Therefore, since $\A$ is ordinary, we conclude that $\A$ is simple by 
        Theorem \ref{Thm:HZ}.

        If $b=1$ and $g=2$, we have from \eqref{Thm:f} that
        \[f_{\A}(t)=t^4+t^3+(mr+1)t^2+qt+q^2,\] where $a_1=1$ and $a_2=mr+1$. Thus, it is easy to see from Theorem \ref{Thm:HZ1} that $\A$ is absolutely simple, which proves \emph{(\ref{IMain:1})}.

         Finally, to establish \emph{(\ref{IMain:2})}, suppose that $f_{\A}(\theta)=0$. Since $b>1$, it follows from \eqref{Thm:f} and the irreducibility of $f_{\A}(t)$ that the minimal polynomial of $\theta^{\rho^{b-1}}$ has degree $\rho-1$. Hence, $\Q\left(\theta^{\rho^{b-1}}\right)\ne \Q(\theta)$, and $\A$ is not absolutely simple by Theorem \ref{Thm:HZ2}.
  \end{proof}






\end{document}